\providecommand{\norm}[1]{\left\lVert#1\right\rVert} 
\providecommand{\dotp}[1]{\left\langle#1\right\rangle}
\providecommand{\floor}[1]{\left\lfloor#1\right\rfloor}
\providecommand{\smallmat}[1]{\left(\begin{smallmatrix}#1\end{smallmatrix}\right)}
\providecommand{\mathlarger}[1]{\mbox{\larger$#1$}}
\providecommand{\partialby}[2]{\frac{\partial #1}{\partial #2}}
\DeclareMathOperator{\spn}{span} 
\DeclareMathOperator{\Gal}{Gal}
\DeclareMathOperator{\Orth}{O}
\DeclareMathOperator{\Ad}{Ad}
\DeclareMathOperator{\tr}{tr}
\DeclareMathOperator{\so}{so}
\DeclareMathOperator{\charpoly}{charpoly}
\DeclareMathOperator{\proj}{proj}
\newcommand{\gal}{\ensuremath{\mathfrak{gal}}}
\newcommand{\g}{\ensuremath{\mathfrak{g}}}
\newcommand{\h}{\ensuremath{\mathfrak{h}}}
\newcommand{\R}{\ensuremath{\mathbb{R}}}
\newcommand{\C}{\ensuremath{\mathbb{C}}}
\newcommand{\U}{\ensuremath{\mathcal{U}}}
\newcommand{\z}{\ensuremath{\mathfrak{z}}}
\newcommand{\Exterior}{\bigwedge\nolimits}
\newtheorem{theorem}{Theorem}
\author{Mathew Wolak}
\address{Tufts University}
\email{mathew.wolak@tufts.edu}
\thanks{The author wishes to thank Fulton Gonzalez for introducing him to the
subject, and Tomoyuki Kakehi for his support. This material is based on work
supported by the National Science Foundation under Grant No. OISE-1310973.}
\title{Bi-invariant differential operators on the Galilean group}
\begin{document}
\begin{abstract}
The Galilean group is the group of symmetries of Newtonian mechanics, with Lie algebra $\gal(n)$. We find algebraically independent generators for the center of the universal
enveloping algebra of $\gal(n)$ using coadjoint orbits.
\end{abstract}
\maketitle
\section{Introduction}
The Galilean group $\Gal(n)$ is the Lie group of transformations between
reference frames in Newtonian mechanics in $n$ dimensional space. It is
generated by spatial rotations, translations in space and time, and
boosts, which correspond to changes in velocity (see \cite{mmcm}).
Elements of $\Gal(n)$
can be represented as $(n+2)\times (n+2)$ matrices 
\begin{align}
\label{GalGroup}
\left(\begin{array}{c|c}
\mathlarger{\mathlarger{\rho}} & \begin{smallmatrix}v_1 & x_1 \\\vdots &
\vdots\\v_n & x_n\end{smallmatrix} \\ 
\hline 
0 & \begin{smallmatrix} 1 &x_0\\0 & 1\end{smallmatrix}
\end{array}\right)
\end{align}

where $\rho$ is an element of $\Orth(n)$,
$\left(\begin{smallmatrix}x_1\\\vdots \\
x_n\end{smallmatrix}\right)$ the spatial translation, $x_0$ the time
shift, and $\left(\begin{smallmatrix}v_1\\ \vdots
\\ v_n\end{smallmatrix}\right)$
is the boost.

To get the action on $n+1$-dimensional spacetime, we first identify
affine $(n+1)$-space with the plane $x_{n+2} = 1$ in $\R^{n+2}$.
Then the action on a point $(X_1,\ldots,X_n,T) \in \R^{n+1}$ is:
\[
\left(\begin{array}{c|c}
\mathlarger{\mathlarger{\rho}} & \begin{smallmatrix}v_1 & x_1 \\\vdots &
\vdots\\v_n & x_n \end{smallmatrix}\\
\hline 
0 & \begin{smallmatrix} 1 &x_0\\0 & 1\end{smallmatrix}
\end{array}\right)\left(\begin{smallmatrix}
X_1 \\
\vdots \\
X_n \\
T \\
1
\end{smallmatrix}\right)
 = \left( \begin{smallmatrix}
X_1' \\
\vdots \\
X_n' \\
T' \\
1
\end{smallmatrix}\right)
\]

Taking derivatives, the Lie algebra $\gal(n)$ of $\Gal(n)$ is identified with
the set of matrices of the form
\begin{align}
\label{GalAlgebra}
\left(\begin{array}{c|c}
\mathlarger{\mathlarger{K}} & \begin{smallmatrix}v_1 & x_1 \\\vdots & \vdots\\v_n & x_n\end{smallmatrix}\\
\hline 
0 & \begin{smallmatrix} 0 &x_0\\0 & 0\end{smallmatrix}
\end{array}\right)
\end{align}
where $K$ is an element of $\so(n)$.

The Galilean group is an important example of a {\em contraction}. 
Given a Lie algebra $\g = (V,[\cdot,\cdot])$, and a subalgebra $\h =
(U,[\cdot,\cdot])$, choose a complementary subspace $W$ so that
$V = U \oplus W$.
For $\epsilon > 0$, let $T_\epsilon$ be the linear operator\[
T_\epsilon (u + w) = u + \epsilon w \; \; \; \;(u \in U, w \in W)
\]
One can define a new bracket $[\cdot,\cdot]_\epsilon$ on $V$ by
\[
[X,Y]_\epsilon = T_\epsilon^{-1}[T_\epsilon X,T_\epsilon Y]
\] The Lie algebra
$(V,[\cdot,\cdot]_\epsilon)$ is isomorphic to $\g$. As long as $\h$ is a
subalgebra, we may define $[\cdot,\cdot]_0 = \lim\limits_{\epsilon
\rightarrow 0} [\cdot,\cdot]_\epsilon$. The resulting Lie algebra
$\g_0 = (V,[\cdot,\cdot]_0)$ is in general not isomorphic to $\g$, and is
known as an In\"on\"u-Wigner contraction \cite{gilmore}. Note that the
subalgebra $\h_0 = (U, [\cdot,\cdot]_0)$ is isomorphic to $\h$, and that
$(W,[\cdot,\cdot]_0)$ is abelian. The contraction process can be seen
as `flattening' $W$.

The Galilean group is a contraction of the
Poincar\'e group, and this reflects the fact that special relativity
becomes Newtonian mechanics in the limit as the speed of light goes to
infinity. The Poincar\'e group is itself a contraction, whose bi-invariant
differential operators were found in \cite{takiff} and \cite{fgonz88}.

The universal enveloping algebra $\U(\g)$ of a Lie algebra can be
identified with the
algebra of left-invariant differential operators on the corresponding Lie
group $\mathbf{D}(G)$. If $S(\g)$ is the symmetric algebra on $\g$, there
is a linear bijection $\lambda : S(\g) \rightarrow \mathbf{D}(G)$ defined by
\[
(\lambda(P)f)(g) = P(\partial_1,\ldots,\partial_n)f(g \exp(t_1X_1 +
\dots + t_nX_n))
\]

where $\partial_i = \partialby{}{t_i}$, and $X_1,\ldots,X_n$ is a
basis of $\g$. \cite[ch II, Theorem 4.3]{helgGGA}

This bijection commutes with the adjoint action of $G$ on $S(\g)$ and
$\mathbf{D}(G)$, and so we have, for $P \in S(\g)^{G}$: \[
\Ad(g)\lambda(P) = \lambda(\Ad(g)P) = \lambda(P)
\]
which shows that the image of an $\Ad(G)$ invariant polynomial is itself
$\Ad(G)$ invariant. Since the $\Ad(G)$ invariant elements of $\U(\g)$ are
exactly  $\z(\U(\g))$, we have identified $S(\g)^{G}$ with $\z(\U(\g))$.

Unfortunately, while $\lambda$ is a linear bijection, it isn't an isomorphism
of algebras, and so in general, $\lambda(PQ)  \neq \lambda(P)\lambda(Q)$.
However we do have that: \begin{align}
\label{highdeg}
\deg(\lambda(PQ) - \lambda(P)\lambda(Q)) < \deg(\lambda(PQ))
\end{align}
This allows us to show inductively that if $\{P_1, \dots,P_m\}$ generate
$S(\g)^G$, then $\{\lambda(P_1),\ldots,\lambda(P_m)\}$ generate $\z(\U(\g))$.
The degree zero case is trivial.
If $D \in \z(\U(\g))$, then we can write \[
D = \lambda(q(P_1,\ldots,P_m))
\]
for some polynomial $q$. Then \[
D - q(\lambda(P_1),\ldots,\lambda(P_m))
\]
is a central element whose degree is less than $\deg(D)$ by \eqref{highdeg}.
By induction, $\z(\U(\g))$ is generated by
$\{\lambda(P_1,\ldots,\lambda(P_m)\}$.

Elements of $S(\g)$ can be viewed as polynomials on $\g^*$, the dual
space of $\g$. With this identification, $\Ad(g)P(X^*) =
P(\Ad^*(g^{-1})X^*)$,
where $\Ad^*$ is the coadjoint representation of $G$ on $\g^*$.
A polynomial which is invariant under the adjoint representation will be
constant on coadjoint orbits. This allows us to use the tools of classical
invariant theory to solve the problem.

\section{Restriction of the Problem}

We would like to find a subspace $S$ of $\g^*$ for which the set
$\{Ad^*(g)s | g \in G, s \in S\}$ is Zariski dense in $\g^*$. In such a case,
any polynomial which is invariant under the coadjoint action is defined
by its values on that subspace. In particular, we would like to find
an $S$ which is {\em transversal} to the coadjoint orbits.
Since in general orbits
will intersect the transversal subspace multiple times, the restriction of
an invariant function will satisfy some further restrictions, namely that it be
invariant under the the action of the subgroup of $G$ which fixes the
subspace.

Any polynomial function on $\g^*$ will restrict to a polynomial function
on a subspace, but the converse is not true. Therefore, after identifying
candidate polynomials, we must eliminate those which don't extend to a
polynomial function.

For a semisimple Lie group, the Killing form provides a nondegenerate inner
product with which we can identify the Lie algebra and its dual. Because the
Killing form is invariant with respect to the adjoint action, the coadjoint
action is essentially the same as the adjoint action. Unfortunately, $\Gal(n)$
does not have such a convenient inner product. Instead, we use the usual matrix
inner product $\dotp{A,B} = \tr(A^TB)$.

\section{The Coadjoint Action}
Recalling the matrix forms of the Galilean group and its Lie algebra given in
\eqref{GalGroup} and \eqref{GalAlgebra}, we must now describe the dual space
$\gal^*$ and the coadjoint action of $\Gal(n)$ on $\gal^*$.

To represent $\gal(n)^*$, we use the usual matrix inner product
$A^*(B) = \tr(A^T B)$. We can then identify $\gal(n)^*$ with
$\R^{(n+2)\times (n+2)}$ modulo $\gal(n)^\perp$, where
\[
\gal(n)^\perp = \left\{
\left(\begin{array}{c|c}
\mathlarger{\mathlarger{S}} & \begin{smallmatrix}0 & 0\\\vdots & \vdots\\
0 & 0\end{smallmatrix} \\ \hline
A & \begin{smallmatrix}a & 0 \\b & c\end{smallmatrix}
\end{array}\right) \mid
S \text{ symmetric}, A \in \R^{2\times n}, a,b,c \in \R
\right\}
\]
Using the fact that the trace is invariant under cyclic permutations,
\begin{align*}
(\Ad^*(g)A^*)(B) &= A^*(\Ad(g^{-1}B) \\
&= A^*(g^{-1}Bg) \\
&= \tr(A^Tg^{-1}Bg) \\
&= \tr(gA^Tg^{-1}B) \\
&= ((gA^Tg^{-1})^T)^*(B)
\end{align*}

Every element of $\Gal(n)$ can be written as $\tau\rho$, where
$\rho$ is a spatial rotation and $\tau$ is a space-time translation and boost.
Because of this, we may
consider the actions of rotations separately from boosts and translations.

If \[
\rho = \left(\begin{array}{c|c}
\rho & \begin{smallmatrix}0 & 0\end{smallmatrix} \\
\hline
0 & \begin{smallmatrix}1 & 0 \\ 0 & 1\end{smallmatrix}
\end{array}\right)
\]

and \begin{align}
\label{genelt}
A^* = \left(\begin{array}{c|c}
\mathlarger{\mathlarger{K^*}} & \begin{smallmatrix}v^*_1 & x^*_1 \\\vdots & \vdots\\v^*_n & x^*_n\end{smallmatrix}\\
\hline 
0 & \begin{smallmatrix} 0 &x^*_0\\0 & 0\end{smallmatrix}
\end{array}\right)
\end{align}

then a straightforward calculation shows:
\[
\rho^{-1} = \left(\begin{array}{c|c}
\rho^{-1} & \begin{smallmatrix}0 & 0\end{smallmatrix} \\
\hline
0 & \begin{smallmatrix}1 & 0 \\ 0 & 1\end{smallmatrix}
\end{array}\right)
\]
and
\begin{align}
\label{rotate}
(\rho A^{*T}\rho^{-1})^T = \left(\begin{array}{c|c}
\rho K^*\rho^{-1} & \rho\smallmat{v_1^* & x_1^*\\ \vdots & \vdots \\
v_n^* & x_n^*} \\ \hline
0 & \begin{smallmatrix} 0 & x_0^* \\ 0 & 0\end{smallmatrix}
\end{array}\right)
\end{align}

For translations and boosts, \begin{align}
\label{transboost}
\begin{split}
\tau &= \left(\begin{array}{c|c}
I & \begin{smallmatrix}v_1 & x_1 \\ \vdots & \vdots \\ v_n & x_n\end{smallmatrix}
\\ \hline
0 & \begin{smallmatrix}1 & x_0 \\ 0  & 1\end{smallmatrix}
\end{array}\right) \\
\tau^{-1} &= \left(\begin{array}{c|c}
I & \begin{smallmatrix}-v_1 & v_1x_0 - x_1 \\ \vdots & \vdots \\
-v_n & v_nx_0 - x_n\end{smallmatrix} \\ \hline
0 & \begin{smallmatrix}1 & -x_0 \\ 0 & 1\end{smallmatrix}
\end{array}\right) \\
(\tau A^*\tau^{-1})^T &= \left(\begin{array}{c|c}
K^* + \smallmat{v_1^* & x_1^* \\ \vdots & \vdots \\v_n^* & x_n^*}
\smallmat{v_1 \dots v_n\\x_1 \dots x_n} &
\begin{smallmatrix}v_1^* + x_0x_1^* & x_1^* \\ \vdots & \vdots \\v_n^* + x_0x_n^* & x_n^*\end{smallmatrix} \\ \hline
* & \begin{smallmatrix}* & x_0^* + \sum v_ix_i^* \\ * & *\end{smallmatrix}
\end{array}\right)\\
&\equiv \left(\begin{array}{c|c}
K^* + \frac{1}{2}\smallmat{v_1^* & x_1^* \\ \vdots & \vdots \\v_n^* & x_n^*}
\smallmat{v_1 \dots v_n\\x_1 \dots x_n} -
\frac{1}{2} \smallmat{v_1 \dots v_n\\x_1 \dots x_n}^T
\smallmat{v_1^* & x_1^* \\ \vdots & \vdots \\v_n^* & x_n^*}^T &
\begin{smallmatrix}v_1^* + x_0x_1^* & x_1^* \\ \vdots & \vdots \\v_n^* + x_0x_n^* & x_n^*\end{smallmatrix} \\ \hline
0 & \begin{smallmatrix}0 & x_0^* + \sum v_ix_i^* \\ 0 & 0\end{smallmatrix}
\end{array}\right)
\end{split}
\end{align}

where the final congruence is modulo $\gal(n)^\perp$

To find a transversal subspace of $\gal(n)^*$, let $A^*$ be a generic
element written as as \eqref{genelt}. We would like to find a $g$ which can take
$(g^{-1}A^{*T}g)^T$ to our subspace. 

First, use a rotation \eqref{rotate} to put $x^*$ in the form
$(A 0 \dots 0)^T$ and $v^*$ 
in the form $(C B 0 \dots 0)^T$. We can then use an $x_0$-only translation
element (equation \eqref{transboost} with $x_0 = -\frac{v_1^*}{x_1^*}$) to zero out the first element of $v^*$. Thus we can restrict our
attention to matrices of the form \[
\left(\begin{array}{c|c}
K^* & \begin{smallmatrix}0 & A\\ B & 0\\ 0 & 0 \\\vdots & \vdots\end{smallmatrix}
\\ \hline
0 & \begin{smallmatrix} 0 & x_0^* \\ 0 & 0\end{smallmatrix}
\end{array}\right)
\]

Let's now turn to the top-left quadrant. Referring again to \eqref{transboost},
\begin{align*}
\frac{1}{2}\smallmat{0 & A \\ B & 0 \\ 0 & 0 \\\vdots & \vdots}
\smallmat{v_1 &  \dots & v_n \\ x_1 & \dots & x_n} -
\frac{1}{2}\left(\smallmat{0 & A \\ B & 0 \\ 0 & 0 \\\vdots & \vdots}
\smallmat{v_1 &  \dots & v_n \\ x_1 & \dots & x_n}\right)^T  \\ 
= \frac{1}{2}\smallmat{0 & Ax_2 - Bv_1 & Ax_3 & Ax_4 & \dots & Ax_n\\
Bv_1 - Ax_2 & 0 & Bv_3 & Bv_4 &\dots & Bv_n \\
-Ax_3 & -Bv_3 & 0 & 0 & \dots & 0\\
-Ax_4 & -Bv_4 & 0 & 0 & \dots & 0\\
\vdots &\vdots &\vdots &\vdots &\ddots & \vdots\\
-Ax_n & -Bv_n & 0 & 0 & \dots & 0}
\end{align*}
Appropriate choices for the $x_i$'s and $v_i$'s allow us to zero out the
topmost two rows and leftmost two columns of $K^*$. In particular, for $i \geq
3$, we set $x_i = -\frac{2K^*_{1i}}{A}$ and $v_i -\frac{2K^*_{2i}}{B}$. It is
important to note that there is a remaining degree of freedom when zeroing
$K^*_{12}$, as this will allow us to clear $x^*_0$ by setting $v_1 = 
-\frac{x_0^*}{x_1^*}$ in \eqref{transboost}.

We are free to choose a rotation from the subgroup of $\Orth(n)$ which fixes
the first two coordinates. We can use this freedom to conjugate the upper-left
block to an element of a maximal torus in $\so(n-2)$.

Our transversal subspace is made of matrices of the form\[
\left(\begin{array}{c|c}
\begin{smallmatrix} 0 & 0 & \dots \\ 0 & 0 & \dots \\ \vdots & \vdots & K
\end{smallmatrix} & 
\begin{smallmatrix} 0 & A \\ B & 0 \\ 0 & 0 \\ \vdots & \vdots
\end{smallmatrix} \\ \hline
0 & \begin{smallmatrix}0 & 0 \\ 0 & 0\end{smallmatrix}
\end{array}\right)
\]
where $A$ and $B$ are real numbers, and $K$ is an element of a $\so(n-2)$
maximal torus.

\section{Case $n = 1$}

When $n = 1$, the upper-left hand block is always zero.
We use $x_0$ to zero out the $v_1^*$ and $v_1$ to zero out $x_0^*$. Thus,
a generic element \[
\left(\begin{array}{c c c} 0 &v_1^* & x_1^* \\0 &0& x_0^*\\ 0 & 0 & 0
\end{array}\right)
\]
of $\gal(1)^*$ can be conjugated to\[
\left(\begin{array}{c c c} 0 &0 & x_1^* \\0 &0& 0\\ 0 & 0 & 0
\end{array}\right)
\]

Since $\left(-1\right) \in \Orth(1)$, the algebra of $\Ad^*$-invariant
polynomials is generated by $X_1^2$
\section{Case $n = 2,3$}

\begin{theorem}
For $n \in \{2,3\}$, $S(\gal(n))^{\Gal(n)}$ is generated by $\sum\limits_i
X_i^2$ and \linebreak
$\left(\sum\limits_i X_i^2\right)\left(\sum\limits_i V_i^2\right) -
\left(\sum\limits_i X_iV_i\right)^2$
\end{theorem}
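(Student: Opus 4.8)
The plan is to verify directly that the two displayed polynomials are $\Gal(n)$-invariant, to deduce their algebraic independence by restricting them to the transversal subspace obtained above, and then to show that every invariant coincides, on that subspace, with a polynomial in these two. The last step carries the real weight.

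Write $P_1 = \sum_i X_i^2$ and $P_2 = \bigl(\sum_i X_i^2\bigr)\bigl(\sum_i V_i^2\bigr) - \bigl(\sum_i X_i V_i\bigr)^2$, viewed as polynomial functions on $\gal(n)^*$ in the coordinates of \eqref{genelt}. Each depends only on $x^* = (x_1^*,\dots,x_n^*)$ and $v^* = (v_1^*,\dots,v_n^*)$, and indeed only through $\abs{x^*}^2$, $\abs{v^*}^2$ and $\dotp{x^*,v^*}$. By \eqref{rotate} a spatial rotation $\rho$ sends $(x^*,v^*)$ to $(\rho x^*,\rho v^*)$, preserving all three quantities; by \eqref{transboost} a space-time translation and boost $\tau$ fixes $x^*$ and replaces $v^*$ by $v^* + x_0 x^*$, so $\abs{x^*}^2$ is unchanged while $\abs{v^*}^2 \mapsto \abs{v^*}^2 + 2x_0\dotp{x^*,v^*} + x_0^2\abs{x^*}^2$ and $\dotp{x^*,v^*} \mapsto \dotp{x^*,v^*} + x_0\abs{x^*}^2$. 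A short computation then confirms that $P_1$ and $P_2$ are fixed in both cases, and since rotations together with translations and boosts generate $\Gal(n)$, we get $P_1,P_2 \in S(\gal(n))^{\Gal(n)}$.

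For $n \in \{2,3\}$ the $\so(n-2)$ block of the transversal subspace $S$ is zero, so $S$ is the plane of matrices with $x^* = Ae_1$, $v^* = Be_2$ and all remaining entries zero, $A,B\in\R$; on $S$ one has $P_1|_S = A^2$ and $P_2|_S = A^2B^2$. These are algebraically independent in $\R[A,B]$, so $P_1$ and $P_2$ are algebraically independent and $\R[P_1,P_2]$ is a polynomial ring. Because $\{\Ad^*(g)s : g\in\Gal(n),\,s\in S\}$ is Zariski dense in $\gal(n)^*$, restriction to $S$ is injective on $S(\gal(n))^{\Gal(n)}$. The reflections $e_1 \mapsto -e_1$ and $e_2 \mapsto -e_2$ (fixing the other basis vectors) lie in $\Orth(n)$ and, by \eqref{rotate}, carry $S$ to itself, acting on it by $(A,B)\mapsto(-A,B)$ and $(A,B)\mapsto(A,-B)$; hence for invariant $f$ the restriction $f|_S$ is even in $A$ and even in $B$, so $f|_S = \phi(A^2,B^2)$ for a unique $\phi\in\R[u,w]$.

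It remains to prove $f \in \R[P_1,P_2]$, and this is the crux: an invariant \emph{rational} function such as $\sum_i V_i^2$ restricts on $S$ to the perfectly good polynomial $B^2$ without itself being a polynomial, so the passage from ``restriction to $S$'' back to ``global invariant polynomial'' must be controlled by the arithmetic of $\R[P_1,P_2]$ inside $S(\gal(n))$. On $S\cap\{A\neq 0\}$ we have $B^2 = (P_2/P_1)|_S$, so $f$ and the invariant rational function $\phi(P_1,P_2/P_1)$ agree on $\Ad^*(\Gal(n))\cdot(S\cap\{A\neq 0\})$, which is still Zariski dense in $\gal(n)^*$; hence $f = \phi(P_1,P_2/P_1)$ as rational functions, and clearing denominators yields $P_1^N f = R(P_1,P_2)$ in $S(\gal(n))$ for some $R\in\R[u,w]$ and some $N\geq 0$. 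Now $S(\gal(n))$ is a polynomial ring, hence a unique factorization domain; $P_1 = \sum_i X_i^2$ is an irreducible real quadratic form in $n\geq 2$ variables, hence prime; $P_1 \nmid P_2$ (otherwise $P_1 \mid (\sum_i X_i V_i)^2$, forcing the impossible $\sum_i X_i V_i = cP_1$); and, more generally, $P_1 \nmid g(P_2)$ for every nonzero $g\in\R[w]$, because over $\C$ any irreducible factor $q$ of $P_1$ involves only the $X_i$, so $\{q=0\}$ contains points with $x^*\neq 0$ and $\abs{x^*}^2 = 0$ at which $P_2 = -\dotp{x^*,v^*}^2$ still varies with $v^*$, preventing $P_2$ from being constant on $\{q=0\}$. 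These facts force the power of $P_1$ dividing $R(P_1,P_2)$ to equal the least power of $u$ occurring in $R(u,w)$; since $P_1^N \mid R(P_1,P_2)$ this gives $u^N \mid R(u,w)$, say $R = u^N\tilde R$, whence $P_1^N f = P_1^N\tilde R(P_1,P_2)$ and $f = \tilde R(P_1,P_2)\in\R[P_1,P_2]$. The main obstacle is precisely this last argument controlling the extension; the invariance of $P_1,P_2$ is routine and the Zariski density of the translates of $S$ has already been established.
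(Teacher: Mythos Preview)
Your proof is correct and follows the same strategy as the paper's: restrict to the transversal subspace, identify the restriction of an invariant as $\phi(A^2,B^2)$, and then show that the resulting rational expression $\phi(P_1,P_2/P_1)$ is actually polynomial in $P_1,P_2$. The only difference is packaging: where the paper kills the fractional part by evaluating at an explicit complex point with $Q_1=0$ and $Q_2$ free, you recast the same step as primality of $P_1$ together with $P_1\nmid g(P_2)$, which you prove by the identical device of passing to $\C$ and observing that $P_2$ is nonconstant on $\{P_1=0\}$.
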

\begin{proof}
When $n = 2$ or $3$, $K^*$ may not be zero, but
we can still zero out the upper-left block entirely. If \[\Xi =
\left(
\begin{array}{c|cc}
K^* &v^* & x^* \\  \hline
0 & 0 & t^*  \\ 0 & 0 & 0\\
\end{array}\right)
\] is a matrix representing an element of $\gal(n)^*$, then
Then there exists an element $g \in \Gal(n)$ such that
\[
\Ad(g)^*\Xi \equiv 
\left(\begin{array}{c|cc}
0 & \begin{smallmatrix} 0 \\ B \end{smallmatrix} & \begin{smallmatrix} A \\ 0
\end{smallmatrix} \\ \hline 0 & 0 & 0\\
0 & 0 & 0 \\
\end{array}\right)
\]
where $A = \norm{x^*}$ and $B = \norm{\proj_{x^{*\perp}}(v^*)}$. Conjugating
by an element of $\Orth(n)$ can switch the signs of $A$ and $B$ independently,
so the
invariant polynomials on the transverse manifold are generated by $A^2$ and
 $B^2$. Thus any polynomial $P$ which is invariant under the coadjoint
action of $\Gal(n)$ restricts to a polynomial $\overline{P} = F(A^2,B^2)$
defined on the transverse manifold.

Define polynomials $Q_1 : \gal(n)^* \rightarrow
\R$ and $Q_2 : \gal(n)^* \rightarrow \R$, which take an element of $\gal(n)^*$
to the values of $A^2$ and $A^2B^2$ on the intersection of its $\Ad^*$ orbit
with the transversal submanifold. If $\Xi$ is a matrix representing
an element of $\gal(n)^*$, we have:
\begin{align*}
Q_1(\Xi) &= \norm{x^*}^2 = \left(\sum\limits_iX_i^{2}\right)(\Xi) \\
Q_2(\Xi) &= \norm{x^*}^2\norm{\proj_{x^{*\perp}}v^*}^2 = \left(
\left(\sum\limits_i X_i^{2}\right)\left(\sum\limits_i V_i^{2}\right)
- \left(\sum\limits_i X_i V_i\right)^2\right)(\Xi)
\end{align*}
Because the restriction is injective on the space of $\Ad^*$ invariant
polynomials, we know that \begin{align}
P(\Xi) &= F\left(Q_1(\Xi),\frac{Q_2(\Xi)}{Q_1(\Xi)}\right) \\
\label{23Fbreakdown}
&= F_1(Q_1(\Xi),Q_2(\Xi)) + \frac{F_2(Q_1(\Xi),Q_2(\Xi))}{Q_1(\Xi)^\ell}
\end{align}
where $F_2$ is indivisible by its first argument.

While we are considering only real Lie algebras, if $P$ is a polynomial,
it must extend to a polynomial on the complexification of $\gal(n)^*$. With
this in mind, consider \[
\Xi = \left(\begin{array}{cc|cc}
0 & 0 & 0 & i\\
0 & 0 & z & 1 \\
\hline
0 & 0 & 0 & 0\\
0 & 0 & 0 & 0
\end{array}\right)
\]
for $n = 2$ or
\[
\Xi = \left(\begin{array}{ccc|cc}
0 &0 & 0 & 0 & i\\
0 &0 & 0 & z & 1 \\
0 &0 & 0 & 0 & 0 \\
\hline
0 &0 & 0 & 0 & 0\\
0 &0 & 0 & 0 & 0
\end{array}\right)
\]
when $n = 3$. In either case, $Q_1(\Xi) = 0$, and $Q_2(\Xi) = z^2$.
\eqref{23Fbreakdown} then becomes \begin{align*}
P(\Xi) = F_1(0,z^2) + \frac{F_2(0,z^2)}{0^\ell}
\end{align*}
Since $P$ was assumed to be a polynomial, either $\ell = 0$, or $F_2(0,z^2) =
0$ for all $z \in \C$. Since $F_2$ was assumed to be indivisible by its first
element, this implies that either $\ell = 0$ or $F_2$ is the constant function
$0$. Therefore, $P$ is a polynomial in $Q_1$ and $Q_2$
\end{proof}
\section{Case $n > 3$}
\begin{theorem}
When $n > 3$, 
$S(\gal(n))^{\Gal(n)}$ is generated
by 
the sums of determinants of $2k\times 2k$ submatrices formed by taking the
symmetric minors of \[
\left(\begin{array}{c|c}
\mathlarger{\mathlarger{K^*}} & \begin{smallmatrix}v^*_1 & x^*_1 \\\vdots & \vdots\\v^*_n & x^*_n\end{smallmatrix}\\
\hline 
\begin{smallmatrix}-v^{*T}\\-x^{*T}\end{smallmatrix} &
\begin{smallmatrix} 0 &0\\0 & 0\end{smallmatrix}
\end{array}\right)
\] which include the last two rows and columns.
\end{theorem}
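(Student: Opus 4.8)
The overall plan is to run the same three-step scheme as in the $n\le 3$ case — reduce to the transversal, identify the restrictions of the candidate invariants, then show every invariant restricts into the subring they generate — the new ingredient being the $\so(n-2)$-torus block $K=\mathrm{diag}(\theta_1,\dots,\theta_m)$ (written in $2\times2$ blocks, $m=\floor{(n-2)/2}$) that survives on the transversal alongside the scalars $A,B$. First I would record the residual symmetry: the subgroup of $\Gal(n)$ stabilizing the transversal acts by independent sign changes on $A$ and on $B$ (via the reflections $\mathrm{diag}(-1,1,\dots)$, $\mathrm{diag}(1,-1,1,\dots)$ in $\Orth(n)$) and by the full signed-permutation group on the $\theta_i$ (the normalizer of the torus in $\Orth(n-2)$; it is full because $\Gal(n)$ contains all of $\Orth(n)$, not just $\SO(n)$). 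Hence an $\Ad^*$-invariant polynomial $P$ restricts to $\overline P\in\R[A^2,B^2,\sigma_1,\dots,\sigma_m]$ with $\sigma_j=\sigma_j(\theta_1^2,\dots,\theta_m^2)$, and, as before, restriction is injective on the space of invariants, so it is enough to recover $\overline P$.

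Next I would check that the candidates are genuinely $\Ad^*$-invariant polynomials and compute their restrictions. From \eqref{rotate} a rotation acts on the antisymmetric representative $M$ by conjugation by $\mathrm{diag}(\rho,I_2)\in\Orth(n+2)$, and from \eqref{transboost} a boost/translation acts by the congruence $M\mapsto\hat\tau M\hat\tau^{T}$ with $\hat\tau=\smallmat{I & \frac{1}{2}\Lambda^{T}\\ 0 & P}$ (where $\Lambda$ is the $2\times n$ boost/translation block and $P=\smallmat{1 & x_0\\ 0 & 1}$), which fixes $\spn(e_1,\dots,e_n)$ pointwise and is unipotent. The sums of $2k\times 2k$ principal minors anchored at the last rows and columns are unchanged by each operation: block-diagonal conjugation preserves each such minor sum, and one checks from \eqref{transboost} that the unipotent congruence does too — for the lowest one this is just $\norm{x^*}^2\norm{v^*+x_0x^*}^2-\dotp{x^*,v^*+x_0x^*}^2=\norm{x^*}^2\norm{v^*}^2-\dotp{x^*,v^*}^2$ (i.e.\ $\norm{x^*\wedge v^*}^2$ is manifestly invariant), and the higher sums reduce to the same kind of cancellation once the $K^*$-entries enter. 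Together with $\sum_i X_i^2=\norm{x^*}^2$ (the sum of the $2\times 2$ symmetric minors involving the last column), this gives the generating set. On the transversal $x^*=Ae_1$, $v^*=Be_2$ and the only nonzero entries of $K^*$ are the torus entries among coordinates $3,\dots,n$, so a nonzero Pfaffian of a principal submatrix containing the last two indices must pair $e_{n+2}$ with $e_1$, $e_{n+1}$ with $e_2$, and the remaining chosen coordinates into torus pairs; hence $\sum_i X_i^2\mapsto A^2$ and the $2k\times 2k$ sum $\mapsto A^2B^2\sigma_{k-2}$ for $k=2,\dots,m+2$. These $m+2$ polynomials are algebraically independent, which is the right number: the transversal has dimension $m+2$ and meets a generic coadjoint orbit in finitely many points.

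The crux is the converse. Writing $Q_1=\sum_i X_i^2$ and $D_k$ for the $2k\times 2k$ sum, so that $B^2=\overline{D_2}/\overline{Q_1}$ and $\sigma_i=\overline{D_{i+2}}/\overline{D_2}$, I would express $\overline P$ as a polynomial in $Q_1,D_2,\dots,D_{m+2}$ over a denominator $Q_1^aD_2^b$, and then, exactly as in the argument around \eqref{23Fbreakdown}, evaluate $P$ on complex points of $\gal(n)^*_{\C}$ to force the denominator away. Taking $x^*$ isotropic makes $Q_1=0$ while $D_2=-\dotp{x^*,v^*}^2$ and the $D_{i+2}$ can still be prescribed nonzero — here $n>3$ is used, to have enough free coordinates in $K^*$ — so finiteness of $P$ forces the $Q_1$-indivisible part of the numerator to vanish, giving $a=0$; then taking $v^*$ parallel to $x^*$ with $x^*$ anisotropic and $K^*$ generic makes $D_2=0$ with $Q_1,D_3,\dots$ nonzero and forces $b=0$; iterating clears all denominators, so $\overline P$, hence $P$, is a polynomial in the listed generators. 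Transporting through $\lambda$ to $\z(\U(\gal(n)))$ then completes the statement. I expect this last step — assembling the right family of complex test points and the induction that clears the several denominators introduced by the torus invariants — to be the main obstacle, and it is precisely where the argument must go beyond the single test point used for $n\le 3$.
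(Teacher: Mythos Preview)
Your outline tracks the paper's proof closely: same transversal, same residual invariants $A^2,B^2,\sigma_1,\dots,\sigma_m$, same two-stage denominator-clearing over $\C$. Your invariance check is a somewhat different route from the paper's --- the paper instead proves the exterior-algebra identity $\sum_{I}\det M_{I\cup\{n+1,n+2\}}=Q_2\,\tr\bigl(\Exterior^{k}PK^*P\bigr)$ with $P$ the projection onto $\spn\{x^*,v^*\}^\perp$, and reads off both invariance and the restriction from the right-hand side --- but your congruence argument is correct once made precise: since $\hat\tau D\hat\tau^{T}=D$ for $D=I_n\oplus 0_2$ and $\det\hat\tau=1$, one has $\det(\lambda D+\hat\tau M\hat\tau^{T})=\det\bigl(\hat\tau(\lambda D+M)\hat\tau^{T}\bigr)=\det(\lambda D+M)$, and the coefficient of $\lambda^{\,n-2k+2}$ in this polynomial is exactly your anchored sum $D_k$.

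The genuine gap is your second test point. If $v^*$ is taken parallel to $x^*$, then in the skew matrix $M$ the two columns indexed $n+1$ and $n+2$ become proportional (both vanish in the last two rows, and $v^*_i=c\,x^*_i$ in the first $n$), so \emph{every} principal minor containing both of those columns is zero. Hence $D_k=0$ for all $k\ge 2$, not only $D_2$, and you cannot separate the $D_2$-denominator from the higher $D_k$'s this way. The paper avoids this by choosing $x^*$ anisotropic and $v^*$ isotropic but linearly independent from $x^*$ (concretely $x^*=e_1$, $v^*=e_2+ie_3$), which forces $Q_2=0$ while keeping columns $n+1$ and $n+2$ independent. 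It then has to do nontrivial work: computing the anchored minors of the resulting family $Y_\Xi$ (splitting into those that pick up row $2$ versus row $3$, which contribute with opposite signs) and showing that the values $D_3(Y_\Xi),\dots,D_{m+2}(Y_\Xi)$ remain algebraically independent as $\Xi$ ranges over $\so(n-2)$. Your proposal needs both the corrected test point and an argument of this kind; this computation is precisely where the $n>3$ case goes beyond the single evaluation used for $n\le 3$, and you should not expect it to be a one-liner.
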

\begin{proof}
When $n >3$, the coadjoint action can no longer zero out the entire
upper-left block, only the uppermost two rows and leftmost two columns. The
transversal subspace $S$ is made of matrices of the form\[
\left(\begin{array}{c|c}
\begin{smallmatrix} 0 & 0 & \dots \\ 0 & 0 & \dots \\ \vdots & \vdots & K
\end{smallmatrix} & 
\begin{smallmatrix} 0 & A \\ B & 0 \\ 0 & 0 \\ \vdots & \vdots
\end{smallmatrix} \\ \hline
0 & \begin{smallmatrix}0 & 0 \\ 0 & 0\end{smallmatrix}
\end{array}\right)
\]
where $K$ is an $\so(n-2)$ maximal torus.
Conjugating by elements of $\Orth(n)$ can change the signs of $A$ and
$B$ independently,
as well as permuting and changing signs of the elements of $K$.
The polynomials
on $S$ invariant under the subgroup of $\Gal(n)$ which fixes $S$ are generated
by $A^2$, $B^2$, and the coefficients of the characteristic polynomial of $K$
\cite{KNII}.

By
the same argument as in the $n = 2$ case, $B^2$ must be multiplied by $A^2$
to clear the denominator. This gives invariant polynomials $A^2 = \sum X_i^2$
and $A^2B^2 = (\sum X_i^2)(\sum V_i^2) - (\sum X_iV_i)^2$

We now turn to the polynomials which depend on the $K$ part. When conjugating
to $S$, one step was to zero out the rows and columns corresponding to
$x^*$ and $v^*$. This is equivalent to pre- and post- multiplying
by the matrix $P$,
the orthogonal projection to $\spn\{x^*,v^*\}^\perp$.
The resulting element of $\so(n-2)^*$ is subject to a $\Orth(n-2)$ action, and
it is well-known that the resulting invariant polynomials generated by
the coefficients of the characteristic polynomial. Thus we are looking for
$\charpoly(PK^*P)$

We will also make use of some facts about exterior algebras. Suppose
that $e_1,..,e_m$ and $f_1,\ldots,f_n$ are bases for vector spaces
$V_1$ and $V_2$ respectively. 
If $A : V_1 \rightarrow V_2$ is a linear function, then define
$\Exterior^kA : \Exterior^kV_1 \rightarrow \Exterior^kV_2$ to be the map
$x_1\wedge x_2\wedge\cdots\wedge x_k \mapsto Ax_1\wedge Ax_2 \wedge \cdots
\wedge Ax_k$. Then the $(e_{i_1}\wedge\cdots\wedge e_{i_k},f_{j_1}\wedge\cdots
\wedge f_{j_k})$ element of the matrix of $\Exterior^k A$ is
the determinant of the $k\times k$ matrix formed by taking elements in rows
$i_1,\ldots,i_k$ and columns $j_1,\ldots,j_k$. In particular, the coefficient
of the $x^{n-k}$ term of $\charpoly(A)$ is $\tr(\Exterior^k A)$.

Let \[\omega = \frac{v\wedge x}{\norm{v\wedge x}} = \frac{1}{AB}v\wedge x\]
(the sign of $AB$ is ambiguous, WLOG we may assume it's positive),
and let
\[
K' = \left(\begin{array}{c|c}
\mathlarger{\mathlarger{K^*}} & \begin{smallmatrix}v^*_1 & x^*_1 \\\vdots & \vdots\\v^*_n & x^*_n\end{smallmatrix}\\
\hline 
\begin{smallmatrix}-v^{*T}\\-x^{*T}\end{smallmatrix} &
\begin{smallmatrix} 0 &0\\0 & 0\end{smallmatrix}
\end{array}\right)
\]
Let $e_i$ be the vector (written vertically) with a $1$ in the $i$'th row
and zeros elsewhere.

If $y \in \spn\{e_1,\ldots,e_n\}^\perp$:\begin{align}
K'y = K^*y - (y\cdot v^*)e_{n+1} - (y\cdot x^*)e_{n+2}\end{align}
And we also have the following:
\begin{align}
\label{epart}
\left(\Exterior^2K'\right)e_{n+1}\wedge e_{n+2} = v^*\wedge x^* = AB\omega
\end{align} and
\begin{align}
\begin{split}
&\left(\Exterior^2K'\right)\omega \\ &= \frac{1}{AB}\left(\Exterior^2K'\right)v^*\wedge
x^* \\
&= \frac{1}{AB}(K^*v^* - (v^* \cdot v^*)e_{n+1} - (v^*\cdot x^*)e_{n+2})\wedge
(K^*x^* - (x^*\cdot v^*) e_{n+1} - (x^*\cdot x^*) e_{n+2}) \\
&= \frac{1}{AB}\left[K^*v^*\wedge(K^*x^* - (x^*\cdot v^*) e_{n+1} -
(x^*\cdot x^*) e_{n+2}) \right.\\
&+ (K^*v^* - (v^* \cdot v^*)e_{n+1} - (v^*\cdot x^*)e_{n+2})\wedge K^*x^* \\
&+\left((x^*\cdot x^*)(v^*\cdot v^*) - (x^*\cdot v^*)^2\right)e_{n+1}\wedge
e_{n+2}\left.\right]
\end{split}
\end{align}
In particular, note that the $e_{n+1}\wedge e_{n+2}$ term of
$(\Exterior^2K')\omega$ is $ABe_{n+1}\wedge e_{n+2}$.

Now consider any diagonal element $(\Exterior^{k+4}K')_{(I,I)}$ for which
$I = (i_1,\ldots,i_{k+2},n+1,n+2)$. By \eqref{epart},
$(\Exterior^{k+4}K')\alpha\wedge e_{n+1}\wedge e_{n+2} \in
\Exterior^{k+2}\R^{n+2}\wedge\omega$. Since we're only considering diagonal
elements, this means we can restrict our attention to the subspace
\[
\left(\Exterior^k\R^n\right)\wedge\omega\wedge e_{n+1}
\wedge e_{n+2}
\]
where by $\R^n$ refers to $\spn\{e_1,\ldots,e_n\} \subset \R^{n+2}$. Recall that
$P$ projects to $\spn\{x^*,v^*\}^\perp$. The we have, for $y_1,\ldots,y_k \in
\R^n$:
\begin{align}
\label{finalbit}
\begin{split}
&\left(\Exterior^{(k+4)}K'\right)y_1\wedge\cdots\wedge y_k\wedge\omega\wedge
e_{n+1}\wedge e_{n+2} \\
=& \left(\Exterior^{(k+4)}K'\right)Py_1\wedge\cdots\wedge Py_k\wedge\omega\wedge
e_{n+1}\wedge e_{n+2} \\
=& A^2B^2 K^*Py_1\wedge\cdots\wedge K^*Py_k\wedge\omega\wedge e_{n+1}\wedge
e_{n+2} \\
&\quad + \text{terms without $\omega\wedge e_{n+1}\wedge e_{n+2}$} \\
=& A^2B^2PK^*Py_1\wedge\cdots\wedge PK^*Py_k\wedge\omega\wedge e_{n+1}
\wedge e_{n+2} \\
&\quad + \text{terms without $\omega\wedge e_{n+1}\wedge e_{n+2}$}
\end{split}
\end{align}
By \eqref{finalbit}, \[
\sum\limits_{I = (i_1,\ldots,i_k+2,n+1,n+2)}
\left(\Exterior^{k+4}K'\right)_{(I,I)} =
A^2B^2\tr\left(\Exterior^{k}PK^*P\right)
\]
which implies that the sum of $(k+4)\times(k+4)$ subdeterminants
which include the last two
rows and columns is the $x^{n -k}$ coefficient of $A^2B^2\charpoly(PK^*P)$.
Because $PK^*P$ is skew-symmetric, the odd-$k$ terms will be zero.

To summarize, we have the polynomials $Q_1 = x^*\cdot x^*$, $Q_2 =
(v^*\cdot v^*)(x^*\cdot x^*) - (x^*\cdot v^*)^2$, and
$Q_3,\ldots,Q_{2+\floor{n/2}}$, which are $Q_2$ times the characteristic
polynomial coefficients of $PK^*P$

Finally, we must show that the $Q_i$ generate the invariant polynomials.
Suppose that $F(X)$ is an invariant polynomial not generated by the $Q_i$.
Restricting to the transversal subspace $S$, $\overline{F}$ is generated
by the restrictions of $Q_1, \frac{Q_2}{Q_1}, \frac{Q_3}{Q_2},\ldots,
\frac{Q_{2+\floor{n/2}}}{Q_2}$. By the injectivity of restriction to $S$,
this means that
\begin{align}
\label{ratbreakdown}\begin{split}
F(X) &= F'\left(Q_1(X), \frac{Q_2}{Q_1}(X), \frac{Q_3}{Q_2}(X),\ldots,
\frac{Q_{2+\floor{n/2}}}{Q_2}(X)\right)\\
&= F''(Q_1(X),Q_2(X),\ldots,Q_{2+\floor{n/2}}(X)) +
\frac{\tilde{F}(Q_1(X),Q_2(X),\ldots,Q_{2+\floor{n/2}}(X))}
{Q_1(X)^kQ_2(X)^\ell}
\end{split}
\end{align}
where $\tilde{F}$ is assumed not to be divisible by its first or second
arguments.
To show that $F$ is generated by the $Q_i$, we must show that $k = \ell = 0$.

While all of the preceeding work was done over $\R$, any polynomial extends
to a polynomial on $\C$. Consider matrices of the form\[
X_\Xi = \left(\begin{array}{cc|ccc|cc}
0&0&0 & \dots & 0 & 0 & 1\\
0&0&0& \dots & 0  & 1 & i\\
\hline
0&0&&& & 0 & 0\\
\vdots&\vdots& & \Xi & & \vdots & \vdots \\
0&0& & && 0 & 0\\ \hline
0&0&0 & \dots & 0 & 0 &0\\
0&0&0 & \dots & 0 & 0 &0
\end{array}\right)
\]
where $\Xi$ is an $(n-2)\times(n-2)$ skew-symmetric matrix.

For any $\Xi$, $Q_1(X_\Xi) = 0$ and $Q_2(X_\Xi) = 1$. For $3 \leq i \leq
2+\floor{n/2}$, we get the nonconstant
characteristic polynomial coefficients for
$\Xi$, which are known to be algebraically independent. Because of this
algebraic independence, and that $\tilde{F}$ was assumed not to be divisible
by its first argument, there exists a $\Xi$ such that the numerator of
\eqref{ratbreakdown} is nonzero, while $Q_1(X_\Xi)$ is zero. Therefore,
$k = 0$.

To show that $\ell = 0$, let \[
Y_\Xi = \left(\begin{array}{cc|ccc|cc}
0&0&0 & \dots & 0 & 0 & 1\\
0&0&0& \dots & 0  & 1 & 0\\
\hline
0&0&&& & i & 0\\
0 & 0 &&& & 0 & 0\\
\vdots&\vdots& & \Xi & & \vdots & \vdots \\
0&0& & && 0 & 0\\ \hline
0&0&0 & \dots & 0 & 0 &0\\
0&0&0 & \dots & 0 & 0 &0
\end{array}\right)
\]

For any $\Xi \in \so(n-2)$, $Q_1(Y_\Xi) = 1$ and $Q_2(Y_\Xi) = 0$. For $j
\geq 3$, $Q_j(Y_\Xi)$ is the sum of all $2j\times2j$
subdeterminants of \[
Y'_\Xi = \left(\begin{array}{cc|ccc|cc}
0&0&0 & \dots & 0 & 0 & 1\\
0&0&0& \dots & 0  & 1 & 0\\
\hline
0&0&&& & i & 0\\
0 & 0 &&& & 0 & 0\\
\vdots&\vdots& & \Xi & & \vdots & \vdots \\
0&0& & && 0 & 0\\ \hline
0&-1&-i & \dots & 0 & 0 &0\\
-1&0&0 & \dots & 0 & 0 &0
\end{array}\right)
\]
which include the leftmost two columns and bottom two rows.

Consider a skew-symmetric submatrix of $Y'_\Xi$ which includes the last
two rows and columns. If the submatrix doesn't also include the first row
and column, its leftmost column, and thus its determinant, will be zero.
Similarly, any nonzero such minor must also include at least one of the
second and third rows.

If the submatrix includes the second row and column, the corresponding
minor is 
\[
\det\left(\begin{array}{cc|ccc|cc}
0&0&0 & \dots & 0 & 0 & 1\\
0&0&0& \dots & 0  & 1 & 0\\
\hline
0&0&&& & \alpha & 0\\
0 & 0 &&& & 0 & 0\\
\vdots&\vdots& & \hat{\Xi} & & \vdots & \vdots \\
0&0& & && 0 & 0\\ \hline
0&-1&-\alpha & \dots & 0 & 0 &0\\
-1&0&0 & \dots & 0 & 0 &0
\end{array}\right) = \det(\hat{\Xi})
\]
where $\hat{\Xi}$ is a $(2j - 4)\times(2j-4)$ submatrix of $\Xi$, and
$\alpha$ is either $0$ or $i$ depending on whether the third column is
included.

If the submatrix does not include the second row and column (which can
only happen for $n > 4$), it must
include the third, and the minor
is instead \[
\det\left(\begin{array}{cc|ccc|cc}
0&0&0 & \dots & 0 & 0 & 1\\
0&0&\xi_{1k_1}& \dots & \xi_{1k_{2k-4}}  & i & 0\\
\hline
0&-\xi_{1k_1}&& && 0 & 0\\
\vdots&\vdots& & \hat{\Xi} & & \vdots & \vdots \\
0&-\xi_{1k_{2k-4}} && && 0 & 0\\ \hline
0&-i&0& \dots & 0 & 0 &0\\
-1&0&0 & \dots & 0 & 0 &0
\end{array}\right) = -\det(\hat{\Xi})
\]
where now $\hat{\Xi}$ is a $(2j-4)\times(2j-4)$ submatrix of $\Xi$ which {\em
does not include the first row and column}. Summing all of the $2j\times2j$
minors of $Y'_\Xi$ then yields the sum of $(2j -4)\times(2j-4)$ minors of
$\Xi$ which include the first row.

Suppose $\Xi$ is of the form:
\[
\Xi = \left(\begin{array}{c|cccc}
0       & \xi_1 & 0 & \dots & 0 \\\hline
-\xi_1  &       &   &       & \\
0       &       &   &       & \\
\vdots  &       &   & \Xi'  & \\
0       &       &   &       &

\end{array}\right)
\]
Then $\charpoly(\Xi) = \lambda\cdot\charpoly(\Xi') -
\xi_1^2\cdot\charpoly(\Xi')$ Since the nonconstant coefficients of the
characteristic polynomial are the sums of the minors, and are algebraically
independent on $\so(m)$, this tells us that the sums of minors which include
the first row and column are also algebraically independent. Putting everything
together, this tells us that for any $\tilde{F}$ in \eqref{ratbreakdown},
we can find a $Y_\Xi$ such that $Q_2(Y_\Xi) = 0$ and $\tilde{F}(Y_\Xi) \neq 0$.
Therefore, $\ell = 0$ in \eqref{ratbreakdown}, and so every polynomial
on $\gal(n)^*$ which invariant on the coadjoint orbits is generated by our
$Q_j$'s
\end{proof}
\bibliography{bib}{}
\bibliographystyle{plain}
\end{document}